\newcommand{\IZ}{\mathbb{Z}}
\newcommand{\IN}{\mathbb{N}}
\newcommand{\IC}{\mathbb{C}}
\newcommand{\IB}{\mathbb{B}}
\newcommand{\cH}{\mathcal{H}}
\newcommand{\cK}{\mathcal{K}}
\newcommand{\cU}{\mathcal{U}}
\newcommand{\ve}{\varepsilon}
\newcommand{\vp}{\varphi}
\newcommand{\Ga}{\Gamma}
\newcommand{\acts}{\curvearrowright}
\DeclareMathOperator{\supp}{\mathop{supp}}
\DeclareMathOperator{\Ball}{Ball}
\DeclareMathOperator{\graph}{graph}
\DeclareMathOperator{\diag}{diag}
\DeclareMathOperator{\dist}{dist}
\DeclareMathOperator{\Prob}{Prob}
\DeclareMathOperator{\id}{id}
\DeclareMathOperator{\Ghost}{\mathrm{G}^*\!}
\DeclareMathOperator{\HS}{\mathrm{HS}}
\DeclareMathOperator{\Alg}{\mathrm{C}^*\!}
\DeclareMathOperator{\Roe}{\mathrm{C}^*_{\mathrm{u}}\!}
\DeclareMathOperator{\red}{\mathrm{C}^*_{\mathrm{red}}\!}
\DeclareMathOperator{\RF}{\mathrm{C}^*_{\mathrm{RF}}\!}
\newcommand{\abs}[1]{\left\lvert#1\right\rvert}
\newcommand{\norm}[1]{\left\lVert#1\right\rVert}
\newcommand{\set}[2]{\{\, #1 \mid #2\,\}}
\newcommand{\setmid}{\mathrel{}\middle|\mathrel{}}
\newcommand{\inpro}[1]{\mathopen{\langle}#1\mathclose{\rangle}}
\newcommand{\tensor}[2]{#1\otimes {#2}^{\mathrm{op}}}
\newtheorem{thmA}{Theorem}
\newtheorem{corA}[thmA]{Corollary}
\newtheorem{thm}{Theorem}
\newtheorem{prop}[thm]{Proposition}
\newtheorem{lem}[thm]{Lemma}
\theoremstyle{definition}
\newtheorem{defn}[thm]{Definition}
\theoremstyle{remark}
\newtheorem{rem}[thm]{Remark}
\newtheorem{ex}[thm]{Example}
\title[C*-exactness and property A]{$\mathrm{C}^*$-exactness and property A for group actions}
\author{Hiroto Nishikawa}
\address{RIMS, Kyoto University, \mbox{606-8502} Japan}
\email{hnishika@kurims.kyoto-u.ac.jp}
\subjclass{46L05, 51F99}
\keywords{property A, exactness, Schreier graph}
\date{\today}
\begin{document}
\begin{abstract}
    For an action of a discrete group $\Ga$ on a set $X$, we show 
    that the Schreier graph on $X$ has property A if and only if the permutation representation on $\ell_2X$ generates an exact $\Alg$-alge\-bra. 
    This is well known in the case of the left regular action on $X=\Ga$ as the equivalence of $\Alg$-exactness and property A of its Cayley graph. 
    This also generalizes Sako's theorem, which states 
    that exactness of the uniform Roe alge\-bra $\Roe(X)$ characterizes property A of $X$ when $X$ is uniformly locally finite.
\end{abstract}
\maketitle
\section{Introduction}
Guoliang Yu introduced \emph{Property A} as the amenable-type condition on metric spaces in~\cite{Yu00}. 
It is characterized in terms of operator alge\-bras by Skandalis, Tu, and Yu in~\cite[Theorem 5.3]{STY02}, 
that is property A of a \emph{uniformly locally finite} (or \emph{ulf} in short) metric space $X$ is equivalent to nuclearity of the \emph{uniform Roe alge\-bra} $\Roe(X)$. 
Moreover, by Sako's remarkable result (\cite[Theorem 1.1]{Sak20}), exactness, as well as local reflexivity of $\Roe(X)$, also characterizes property A under the ulf assumption.
These results also hold when $X$ is a ulf \emph{coarse space}. See~\cite{Sak13a}.

For a finitely generated group $\Ga$, we equip $\Ga$ with the word metric (or generally the coarse structure) and regard it as a ulf metric space $\abs{\Ga}$. 
In the case of discrete groups, it is well-known that property A is equivalent to $\Alg$-exactness, i.e. the reduced group $\Alg$-alge\-bra $\mathrm{C}^*_{\mathrm{red}}(\Ga)$ being exact.
Kirchberg and Wassermann introduced the notion of exact groups in terms of reduced crossed products and characterized this by exactness of $\mathrm{C}^*_{\mathrm{red}}(\Ga)$ (\cite[Theorem 5.2]{KW99}). 
Ozawa proved that this is equivalent to nuclearity of $\Roe\abs{\Ga}$ in~\cite[Theorem 3]{Oza00}, which is identical with property A of $\abs{\Ga}$ (\cite[Theorem 1.1]{HR00},~\cite{ADR00}).

The purpose of this paper is to unify and generalize these results. 
\begin{thmA}[Main Theorem]\label{thm:A}
    For a discrete group $\Ga$ and an action on a set $X$, the Schreier graph $\abs{\Ga\acts X}$ has property A
    if and only if the $\Alg$-alge\-bra $\Alg(\lambda_X(\Ga))$ generated by the permutation representation $\lambda_X:\Ga\to \IB(\ell_2X)$ is exact.

    Moreover, for a set $\Ga$ of partial translations on a set $X$, property A of the ulf coarse structure on $X$ generated by $\Ga$ is equivalent to exactness of the $\Alg$-algebra $\Alg(\Ga)$ generated by $\Ga$ in $\IB(\ell_2X)$.
\end{thmA}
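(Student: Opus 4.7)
Since the first statement is the special case of the second in which $\Ga$ is a discrete group acting on $X$ by permutations---each $\lambda_X(g)$ being a total bijection, hence a partial translation, and the Schreier graph coarse structure coinciding with the ulf coarse structure generated by $\{\lambda_X(g):g\in\Ga\}$---my plan is to prove only the second statement. For the forward direction, suppose the ulf coarse structure on $X$ generated by $\Ga$ has property A. By the ulf coarse-space version of the Skandalis--Tu--Yu theorem recorded in~\cite{Sak13a}, $\Roe(X)$ is nuclear, and hence exact. Since every $v\in\Ga$ has propagation contained in $\graph(v)$, a controlled set, $\Alg(\Ga)$ embeds as a $\Alg$-subalgebra of $\Roe(X)$, and exactness passes to subalgebras.

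For the reverse direction, suppose $\Alg(\Ga)$ is exact; by Sako's theorem it suffices to deduce that $\Roe(X)$ is exact, which will yield property A. The main structural input is the identification
\[
\Roe(X)=\Alg\bigl(\ell^\infty(X)\cup\Ga\bigr)\subseteq\IB(\ell_2X),
\]
coming from the decomposition of each bounded-propagation operator as a finite sum $\sum_i f_iv_i$ with $f_i\in\ell^\infty(X)$ and $v_i$ in the $*$-algebra generated by $\Ga$. Following Ozawa's strategy for showing $\red(\Ga)$ exact $\Rightarrow$ $\Roe\abs{\Ga}$ nuclear~\cite{Oza00}, I would upgrade the completely positive approximations $\phi_n:\Alg(\Ga)\to M_{k_n}$ provided by exactness into ccp approximations of the whole Roe algebra, using the canonical conditional expectation $E:\Roe(X)\to\ell^\infty(X)$ (compression to the main diagonal) to handle the commutative generators and a suitable averaging scheme over finite subsets of $\Ga$ to recombine the two pieces.

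The chief obstacle is this averaging step: Ozawa uses the full left-translation action of the group $\Ga$ on itself, whereas in our setting only the inverse semigroup of partial bijections generated by $\Ga$ is at our disposal. I expect this to be overcome by recasting the problem in terms of positive definite kernels on $X$ of controlled support---classical witnesses for property A. Concretely, from a ccp approximation $T_n=\psi_n\phi_n$ of the identity on $\Alg(\Ga)$, where $\psi_n:M_{k_n}\to\IB(\ell_2X)$ is a ccp lift furnished by exactness, one can attempt to construct kernels $k_n$ on $X\times X$ that are positive definite, supported in the controlled set determined by the matrix units used in $\psi_n$, and approximate $1$ uniformly along any fixed controlled entourage, thus witnessing property A directly and closing the implication.
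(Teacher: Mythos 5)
The forward direction and the reduction of the partial-translation statement to the group-action statement via the $2\times 2$ unitaries are fine and agree with the paper. The gap is in the reverse direction, and it sits exactly where you flag ``the chief obstacle.'' Your plan is to upgrade the ccp approximations of $\Alg(\Ga)$ coming from exactness into approximations of $\Roe(X)=\Alg(\ell_\infty X\cup\Ga)$ by an Ozawa-style averaging, repaired by extracting positive definite kernels of controlled support. Two things break. First, nuclear embeddability gives maps $\psi_n:M_{k_n}\to\IB(\ell_2X)$ with no control whatsoever on the propagation of operators in their range, so the kernels you would read off have no reason to be supported in an entourage; in the group case this is repaired by convolving against the canonical trace and exploiting that the left and right regular representations commute on $\ell_2\Ga$, and for a general action $\Ga\acts X$ there is neither a trace on $\Alg(\lambda_X(\Ga))$ nor a free transitive commuting action. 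Second, and more fundamentally, the implication is strictly stronger than what a CPAP bootstrap can reach: in the box space example $X=\Box\Ga$ one needs exactness of $\Alg(\lambda_X(\Ga))=\RF(\Ga)$ to imply \emph{amenability} of $\Ga$, not merely exactness of $\Ga$, and this jump is made by a trace-type argument, not by approximation properties of $\RF(\Ga)$ alone. The paper states explicitly that ``CPAP rephrasing'' does not seem to work in this setting; your proposal is that rephrasing.

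The missing idea is the ghost operator together with the weighted-trace mechanism. The paper argues by contradiction: if property A fails, there is a non-compact sparse diagonal ghost $T$ and unit vectors $\xi_n$ with $\norm{T\xi_n-\xi_n}\leq\ve$ and $\norm{h^\gamma\gamma\xi_n-\xi_n}\to 0$. One forms $\Xi=(\diag\xi_n)_n$ in an ultrapower of $\HS(\ell_2X)$, lets $\tensor{\Roe(X)}{\Alg(\lambda_X(\Ga))}$ act by the left-right representation, and uses exactness of $\Alg(\lambda_X(\Ga))$ only to guarantee that the state $\vp_\Xi$ is min-continuous, so that Hahn--Banach provides approximating vector states $\vp_{\zeta^i}$. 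Collapsing the $\zeta^i$ to vectors $\eta^i\in\cH_0$ yields states on $\Roe(X)$ that factor through the compression $\Phi_S$ and therefore annihilate every ghost, while converging to $\lim_\cU\vp_{\xi_n}$, which assigns $T$ a value close to $1$ --- a contradiction. Your outline, even if the averaging could be salvaged, would at best reprove exactness of $\Roe(X)$ and then lean on Sako's theorem, whose proof is itself this ghost argument; as written, the step from $\Alg(\Ga)$ exact to $\Roe(X)$ exact is not established.
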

We prove this \emph{partial translation} version in Section~\ref{appendix} by the $2\times2$ trick. We remark that our theorem contains Sako's result about exactness of $\Roe(X)$ because every ulf coarse space is realized as the Schreier graph for some group action.
Although it is not apparent whether local reflexivity of $\Alg(\lambda_X(\Ga))$ characterizes property A or not, 
we give another proof of Sako's result about local reflexivity of $\Roe(X)$ in Corollary~\ref{cor:Sako}. 

It is also studied exactness of the $\Alg$-algebras $\Alg(\mathcal{T})$ generated by a set $\mathcal{T}$ of some partial translations especially when $X$ has uniform embedding to a discrete group and $\mathcal{T}$ comes from the embedding in~\cite{BNW07}. Our main theorem is the definitive result in this direction and all statements in~\cite[Theorem 29]{BNW07} are equivalent without any assumptions on $X$ and $\mathcal{T}$. Here, we only consider a set of partial translations $\mathcal{T}$ which generates $X$ as a coarse structure.

We prove the main theorem in the same spirit as Sako's paper~\cite{Sak20}, which discovers the importance of a weighted trace and treats it like an amenable trace.
It does not seem that CPAP rephrasing works well, in contrast to the group case~\cite{Oza00}.

We explain our method in this toy example.
\begin{ex}[Box space]
    Let $\Ga$ be a residual finite group and $\Lambda_1\supset\Lambda_2\supset\dots$ be finite index normal subgroups of $\Ga$ whose intersection $\bigcap_m \Lambda_m$ is $\{1\}$. 
    The \emph{box space} $\Box\Ga$ is the disjoint union of finite quotients $\bigsqcup_m \Ga/\Lambda_m$ introduced in~\cite[Definition 11.24]{Roe03}. 
    It is observed that $\Box\Ga$ has property A if and only if $\Ga$ is amenable by Guentner, 
    and that exactness of $\Roe(\Box\Ga)$ also characterizes amenability of $\Ga$ by Willett (the last sentence of~\cite{AGS12}). Since this is one of the motivations in~\cite{Sak13b},\cite{Sak20}, we explain this argument in detail.

    First, we consider $\RF(\Ga):=\overline{\IC[\Ga]}\subset\Roe(\Box\Ga)$. By residual finiteness, the canonical trace $\tau$ on $\RF(\Ga)$ is an amenable trace.
    Since $\RF(\Ga)$ is exact, $\tau$ on $\red(\Ga)$ is also amenable, which is equivalent to amenability of $\Ga$ by using the characterization of an amenable trace in~\cite[Theorem 6.2.7]{BO08}. Along this line, Sako proved that exactness of $\Roe(X)$ implies property A of $X$ in~\cite{Sak20}.

    In this case, our method is to use directly the min-continuity of \[\tau:\tensor{\red(\Ga)}{\RF(\Ga)}\to\IC,\ \ \tau(\tensor{a}{b}):=\tau(ab),\]
    which comes from exactness of $\RF(\Ga)$. By Theorem~\ref{HahnBanach}, one has vectors $\zeta^i\in \ell_2\Ga\otimes\overline{\ell_2X}\otimes \ell_2$ that approximate $\tau$ as the vector states. 
    Taking the norm, we reduce $\zeta^i$ to $\eta^i\in \ell_2\Ga$, which are approximately invariant vectors of $\Ga$ by Lemma~\ref{lem}. So $\Ga$ is amenable.
\end{ex}
\subsection*{Acknowledgements}
The author would like to thank Narutaka Ozawa for his support and helpful comments as my supervisor. The author also thanks anonymous reviewers for many useful suggestions and presenting the reference~\cite{BNW07}.
\section{Preliminaries}
Although we treat general coarse spaces, we reduce everything to the graph case 
by regarding general coarse structures as the directed increasing union of graphs. 
We give a brief introduction to ulf coarse spaces in Section~\ref{appendix}. For more details, see standard references on coarse geometry (\cite{Roe03},~\cite{NY12}).
    \subsection{Definitions}
    For an action of a finitely generated group $\Ga$ on a set $X$, define the \emph{Schreier graph} $\abs{\Ga\acts X}$ as the graph $(X,E)$, where $E:=\set{(sx,x)\in X\times X}{x\in X, s\in S}$ and $S\subset\Ga$ is a symmetric finite generator set. 
    This coincides with the Cayley graph when the action is the left regular action $\Ga\acts\Ga$. As with the Cayley graph, the coarse structure of the Schreier graph does not depend on the choice of $S$.
    
    Let $(X,E)$ be a (possiblly non-connected) undirected graph and $d$ be its graph metric ($d$ takes a value in $[0,\infty]$). 
    We assume that $\sup_{x\in X}\abs{E\cap(X\times\{x\})} <\infty$, which is called \emph{uniformly locally finite}, or \emph{ulf} in short
    (also known as of uniformly bounded degree, of bounded geometry). For $R>0$, we denote by
    \[\IC_\mathrm{u}^{\leq R}(X):=\left\{\, a\in\IB(\ell_2X) \setmid a(x,y)=\inpro{\delta_x,a\delta_y}=0\ \text{if}\ d(x,y)>R\,\right\}\]
    the linear space of $R$-propagation operators
    and define the \textit{uniform Roe alge\-bra} $\Roe(X)$ as norm closure of $\bigcup_{R>0}\IC_\mathrm{u}^{\leq R}(X)$. 
    
    For a Hilbert space $\cH$, denote by $\IB(\cH), \HS(\cH)$  the $\Alg$-alge\-bra of all the bounded operators and the Hilbert space of all the Hilbert--Schmidt operators, respectively. 
    Let $\Prob(X)\subset \ell_1X$ be the space of probability measures on $X$. For $T\in\IB(\ell_2X)$, we denote by $T(x,y)$ $(x,y)$-matrix entries of $T$. We also use the same notation for $\HS(\ell_2X)$ and $\ell_2X$. Simply write the spatial tensor product on $\Alg$-algebras as $\otimes$.
    \subsection{Property A}
    Property A has many characterizations, like amenability in the group theory. We refer to some of them. See~\cite{Yu00} for the original definition.
    \begin{defn}[Guoliang Yu]
        $T\in\Roe(X)$ is called a \emph{ghost} if $\Big(T(x,y)\Big)_{x,y}\in c_0(X\times X)$.
    \end{defn}
    Every compact operator is ghost and ghosts form an ideal $\Ghost(X)$ of $\Roe(X)$~\cite{RW14}.
    \begin{thm}[See~{\cite[Theorem 5.3]{STY02}},~{\cite[Theorem 1.3]{RW14}}]
        For a ulf metric space $X$, the following are all equivalent.
        \begin{itemize}
            \item $X$ has property A.
            \item Every ghost on $X$ is compact.
            \item $\Roe(X)$ is nuclear.
        \end{itemize}
    \end{thm}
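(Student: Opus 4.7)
This theorem combines the Skandalis--Tu--Yu equivalence (property A $\Leftrightarrow$ nuclearity) with the Roe--Willett ghost characterization, so the plan is to sketch each arrow of the cycle
\[
\text{(A)}\ \Longrightarrow\ \Roe(X)\ \text{nuclear}\ \Longrightarrow\ \text{every ghost is compact}\ \Longrightarrow\ \text{(A)}.
\]

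For (A) $\Rightarrow$ nuclearity I would invoke the Reiter-type reformulation: for every $\varepsilon>0$ and $R>0$ there exist $S>0$ and unit vectors $\eta_x\in\ell_2X$ supported in $B(x,S)$ with $\|\eta_x-\eta_y\|<\varepsilon$ whenever $d(x,y)\leq R$. The kernel $k(x,y):=\inpro{\eta_x,\eta_y}$ is positive-definite and of propagation at most $2S$, so the associated Schur multiplier $m_k$ is u.c.p.\ on $\IB(\ell_2X)$ and preserves $\Roe(X)$. Via the isometry $V\delta_y:=\delta_y\otimes\eta_y$ one factors $m_k$ as $T\mapsto V^*(T\otimes 1)V$ through the subhomogeneous (hence nuclear) algebra $\bigoplus_x \IB(\ell_2 B(x,S))$, the uniform ball-size bound being supplied by bounded geometry. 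Letting $\varepsilon\to 0$ and $R\to\infty$ yields a CPAP on the dense subalgebra $\bigcup_R \IC_\mathrm{u}^{\leq R}(X)$, hence nuclearity of $\Roe(X)$.

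For nuclearity $\Rightarrow$ every ghost is compact, nuclearity implies exactness, and standard $\Alg$-algebraic arguments then show the ghost ideal $\Ghost(X)$ coincides with $\IK(\ell_2X)$: a nonzero element of $\Ghost(X)/\IK(\ell_2X)$ would, via ultrafilter compressions along a sequence in $X$ escaping to infinity, produce an obstruction to the min-tensor exactness of the sequence $0\to\IK(\ell_2X)\to\Roe(X)\to\Roe(X)/\IK(\ell_2X)\to 0$, contradicting exactness.

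The main obstacle is the last arrow, ``every ghost compact $\Rightarrow$ (A)'', the substance of \cite[Theorem 1.3]{RW14}. I would argue the contrapositive: if $X$ fails property A, negating Reiter yields $R,\varepsilon>0$ and finite subsets $F_n\subset X$ escaping to infinity supporting unit vectors $\xi_n\in\ell_2 F_n$ that are both spread out ($\sup_x|\xi_n(x)|\to 0$) and approximately invariant under every stochastic kernel of propagation at most $R$. After sparsifying so the $F_n$ are disjoint, the operator $\sum_n |\xi_n\rangle\langle\xi_n|$ has matrix coefficients in $c_0(X\times X)$ and is noncompact. The delicate step, requiring operator-norm-localization arguments, is to verify that it can be norm-approximated by finite-propagation operators and hence lies in $\Roe(X)$, furnishing the required noncompact ghost that contradicts the hypothesis.
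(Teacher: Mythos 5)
The paper offers no proof of this statement; it is quoted as known background with pointers to [STY02] and [RW14], so the only meaningful comparison is with those sources. Your first and third arrows are essentially the standard arguments: property A supplies positive-definite finite-propagation kernels whose Schur multipliers factor through the subhomogeneous algebra $\prod_x\IB(\ell_2\Ball(x,S))$ (bounded geometry caps the ball sizes), giving a CPAP on the finite-propagation operators; and the contrapositive of ``ghosts compact $\Rightarrow$ A'' is exactly the construction of a sparse diagonal non-compact ghost from [RW14, Lemma 4.2], which the paper records as a Proposition. You also correctly identify the delicate point there, namely that the block-diagonal operator $\sum_n\xi_n\xi_n^*$ actually lies in $\Roe(X)$.

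The genuine gap is your middle arrow, ``nuclear $\Rightarrow$ every ghost is compact,'' which you route through exactness and call standard. Combined with your third arrow, the claim that exactness of $\Roe(X)$ forces $\Ghost(X)=\IK(\ell_2X)$ is precisely Sako's theorem that exactness of $\Roe(X)$ implies property A --- the main result of [Sak20] and the very theorem this paper is written to generalize --- so it cannot be waved through. Worse, the specific mechanism you propose cannot work: by Kirchberg's theorem an exact $\Alg$-algebra has property C, hence min-tensoring the sequence $0\to\IK(\ell_2X)\to\Roe(X)\to\Roe(X)/\IK(\ell_2X)\to0$ with any $B$ preserves exactness, so a non-compact ghost produces no obstruction against that sequence. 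Exactness is a condition on tensoring $\Roe(X)$ against arbitrary extensions in the \emph{other} factor, and the real work in [Sak20] (and in this paper) is to manufacture, from a non-compact ghost, a functional of weighted-trace type on $\Roe(X)\otimes_{\min}N$ for a suitable auxiliary algebra $N$ whose min-continuity fails; nothing of that sort appears in your sketch. The standard way to close your cycle is instead ``nuclear $\Rightarrow$ A'' from [STY02] (nuclearity of the reduced $\Alg$-algebra of the coarse groupoid implies amenability of that groupoid, which is property A), after which ``A $\Rightarrow$ ghosts compact'' is easy: the Schur multipliers $m_{k_n}$ converge pointwise in norm to the identity and send a ghost to a finite-propagation operator with entries in $c_0$, which is compact in the ulf setting.
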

    This theorem also holds when $X$ is a ulf coarse space. See~\cite{Sak13a} and~\cite{RW14}.
    \begin{prop}[See{~\cite[Proof of Lemma 4.2]{RW14}} for being sparse]
        For a ulf graph $(X, E)$ without property A, there exists a non-compact ghost $T\in\Roe(X)$. Moreover, we may assume $T$ to be a \emph{sparse diagonal} matrix:
        \begin{itemize}
            \item There exist sparse disjoint subsets $V_n\Subset X$ such that $T=\mathrm{SOT}$-$\sum_n P_{V_n}TP_{V_n}$,
            \item $T_n:=P_{V_n}TP_{V_n}$ is positive and of norm 1,
        \end{itemize}
        where we call $\{V_n\}_n$ \emph{sparse} if $\dist(V_n,V_m)\to\infty$ as $n,m\to\infty$ and $n\neq m$.
    \end{prop}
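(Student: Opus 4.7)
The plan is to start from any non-compact ghost $T_0 \in \Roe(X)$ supplied by the preceding theorem and carve out a sparse diagonal ghost from it by taking bounded rescalings of well-chosen blocks. Replacing $T_0$ with $T_0^* T_0 / \norm{T_0^* T_0}$, I may assume $T_0$ is a positive contraction: non-compactness is preserved because $\abs{T_0} = (T_0^* T_0)^{1/2}$ must be non-compact whenever $T_0$ is, and the ghost property is preserved because the ghosts form a closed ideal in $\Roe(X)$.

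Because $T_0$ is positive and non-compact, spectral theory furnishes $\ve > 0$ and an orthonormal sequence $(\xi_k)$ in $\ell_2 X$ with $\norm{T_0 \xi_k} \geq \ve$. Since $(\xi_k)$ converges weakly to $0$, $\norm{P_F \xi_k} \to 0$ for every finite $F \subset X$; a diagonal argument then lets me pass to a subsequence and replace each $\xi_k$ by a close finitely supported truncation $\eta_k$ so that the supports $W_k := \supp \eta_k$ are pairwise disjoint with $\dist(W_j, W_k) \to \infty$, while still $\norm{T_0 \eta_k} \geq 3\ve/4$. Choose $T' \in \IC_\mathrm{u}^{\leq R}(X)$ with $\norm{T_0 - T'} < \ve/8$ and let $V_k$ be the $R$-neighborhood of $W_k$, which is finite by the ulf assumption. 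Passing to a further subsequence keeps the $V_k$ pairwise disjoint with $\dist(V_j, V_k) \to \infty$, and propagation control gives $P_{V_k} T' P_{V_k} \eta_k = T' \eta_k$, so $\norm{P_{V_k} T_0 P_{V_k}} \geq \ve/2$.

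Now set $c_k := \norm{P_{V_k} T_0 P_{V_k}}^{-1} \leq 2/\ve$ and define
\[
T := \mathrm{SOT}\text{-}\sum_k c_k\, P_{V_k} T_0 P_{V_k}.
\]
Disjointness of the $V_k$ makes the sum SOT-convergent and yields $P_{V_n} T P_{V_n} = c_n P_{V_n} T_0 P_{V_n}$, which is positive of norm exactly $1$. The entries of $T$ are those of $T_0$ rescaled by the uniformly bounded factors $c_k$, hence still in $c_0(X \times X)$, so $T$ is a ghost. For membership in $\Roe(X)$, given $\delta > 0$, approximate $T_0$ by $S_0 \in \IC_\mathrm{u}^{\leq R'}(X)$ within $\delta \ve / 2$; then $S := \sum_k c_k P_{V_k} S_0 P_{V_k}$ has propagation at most $R'$ and $\norm{T - S} \leq \delta$, confirming $T \in \Roe(X)$. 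Non-compactness of $T$ follows from $\norm{T \eta_k} \geq c_k \ve / 2 \geq \ve / 2$ together with the escape of the $\eta_k$.

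The main obstacle is the second step: extracting witnesses whose finite supports genuinely escape to infinity while retaining a uniform lower bound on $\norm{T_0 \eta_k}$. This relies on weak null-convergence of orthonormal sequences combined with the ulf bound controlling the size of each $R$-neighborhood $V_k$, ensuring that the escape of the supports $W_k$ propagates into sparseness of the $V_k$ and hence into the desired block decomposition.
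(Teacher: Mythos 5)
Your argument is correct and is essentially the same block-extraction the paper delegates to \cite[Proof of Lemma 4.2]{RW14}: replace the non-compact ghost by a positive one via $T_0^*T_0$, use non-compactness to find an orthonormal sequence of almost-eigenvectors, truncate and sparsify their supports using weak null convergence and the ulf bound, and rescale the resulting blocks to norm one. All the estimates check out (in particular the $\ve/2$ lower bound on $\norm{P_{V_k}T_0P_{V_k}}$ and the uniform bound $c_k\le 2/\ve$ needed for $T\in\Roe(X)$), so nothing further is required.
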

    \begin{lem}
        Let $(X, E)$ be a ulf graph and $\ve>0$. Then there exists $L>1$ 
        such that for every $\eta\in\Prob(X)$ with a finite support, there exists $\eta'\in \Prob(X)$ satisfying that
        \begin{itemize}
            \item $\norm{\eta-\eta'}_1\leq\ve$,
            \item $L^{-1}\eta'(x)\leq\eta'(y)\leq L\eta'(x)$ for every $(x,y)\in E$,
            \item $\norm{P_V\eta'}\leq\ve^{\dist(V,\supp\eta)}$ for $V\subset X$, where $P_V$ stands for the canonical projection from $\ell_1X$ onto $\ell_1V$.
        \end{itemize}
    \end{lem}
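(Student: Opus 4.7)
The plan is to construct $\eta'$ as a convex combination of geometrically decaying kernels, choosing the decay rate $L$ large enough (depending on $\ve$ and the maximum degree $D$ of $(X,E)$) to satisfy all three constraints simultaneously. Concretely, for each $y\in X$ I set $\nu_y(x):=N_y^{-1}L^{-d(x,y)}$ with $N_y:=\sum_{z\in X}L^{-d(z,y)}$ so that $\nu_y\in\Prob(X)$, and then define
\[\eta'(x):=\sum_{y\in X}\eta(y)\,\nu_y(x).\]
The proposed choice is $L:=2D/\ve$, enlarged by an absolute constant if needed to absorb prefactors.

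The $L$-Lipschitz bound is immediate: for $(x,z)\in E$ one has $\abs{d(x,y)-d(z,y)}\leq 1$, so $L^{-1}\leq \nu_y(x)/\nu_y(z)\leq L$ uniformly in $y$, and this passes to the convex combination. For $\ell_1$ closeness, the ulf sphere bound $\abs{\{z:d(z,y)=r\}}\leq D^r$ gives $N_y\leq 1/(1-D/L)$, hence $\norm{\delta_y-\nu_y}_1=2(1-N_y^{-1})\leq 2D/L\leq\ve$; the triangle inequality in $\ell_1$ then yields $\norm{\eta-\eta'}_1\leq\ve$. For the tail decay with $k:=\dist(V,\supp\eta)\geq 1$, the same sphere estimate gives
\[\sum_{x\in V}\nu_y(x)\leq N_y^{-1}\sum_{r\geq k}D^r L^{-r}\leq (D/L)^k/(1-D/L),\]
which under the chosen $L$ is at most $\ve^k$; the case $k=0$ is vacuous, and $k=\infty$ (different connected components) gives $0$.

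The main subtlety to watch is the tension between closeness and the Lipschitz constraint: closeness wants $\nu_y$ concentrated at $y$, whereas an $L$-Lipschitz measure cannot be too peaked. The reconciliation is to let $L$ grow with $1/\ve$, so that the peak $\nu_y(y)\geq 1-D/L$ can be pushed arbitrarily close to $1$ without violating the $L$-Lipschitz ratio; the same geometric kernel then automatically provides the required $\ve^k$ tail decay. The ulf hypothesis enters only through the sphere-size bound, and the remainder of the verification is routine bookkeeping with geometric series.
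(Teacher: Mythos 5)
Your proposal is correct and is essentially the paper's own argument: both convolve $\eta$ with the geometric kernel $L^{-d(x,y)}$ for $L$ of order $D/\ve$, verify the Lipschitz ratio from $\abs{d(x,y)-d(z,y)}\leq 1$, and control the $\ell_1$ error and the tail via the ulf sphere bound $\abs{\{z:d(z,y)=r\}}\leq D^r$; the only cosmetic difference is that you normalize each kernel $\nu_y$ individually while the paper normalizes the sum at the end.
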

    This lemma is essentially proven in~\cite[Theorem 3.17 (4),(5)]{Sak20}.
    \begin{proof}
        Define $M:=\sup_{x\in X}\abs{E\cap(\{x\}\times X)}<\infty$ and $L:=\max\{1,M(1+\ve^{-1})\}$.
        \[\eta'(x):=\sum_w L^{-d(x,w)}\eta(w)\]
        satisfies the three conditions above because
        \begin{itemize}
            \item $\eta'\geq\eta$ and $\sum_x L^{-d(x,w)}\leq1+\epsilon$ for every $w\in X$,
            \item $d(x,w)+1\geq d(y,w)\geq d(x,w)-1$ for every $(x,y)\in E$,
            \item $\sum_{d(x,w)\geq R}L^{-d(x,w)}\leq \epsilon^R(1+\epsilon)^{-R+1}$ for every $w\in X, R>0$.
        \end{itemize}
        Then we adjust $\eta'$ to have norm 1 and $L$ to be a bit larger.
    \end{proof}
    For the sparse diagnoal ghost $T=(T_n)_n$, take $\xi_n\in \ell_2V_n$ such that $T_n\xi_n=\xi_n$. By conjugating $T$ by a unitary in $\ell_\infty X$, we may assume $\xi_n$ to have non-negative entries. 
    Using the previous lemma for $(\xi_n)^2$, we take $\xi'_n\in \ell_2X$ so that $\norm{(1-P_{V'_n})\xi'_n}\to0$ for a bit larger sparse disjoint subsets $\{V'_n\}$ due to the exponential decay of $(\xi'_n)^2$. 

    Assume that $(X,E)$ is realized as a Schreier graph of some $\Ga\acts X$. For each $\gamma\in\Ga$, define $h^\gamma\in \ell_\infty X$ by $h^\gamma(x)\xi_n(\gamma^{-1}x)=\xi_n(x)$ for $x\in \bigsqcup_nV'_n$ and $h^\gamma(x):=1$ otherwise, 
    then $h^\gamma\in \ell_\infty X$ is invertible and $\norm{h^\gamma\gamma\xi_n-\xi_n}\to0$ as $n\to\infty$. So we obtain the following:
    \begin{prop}\label{notation}
        Let $\Ga\acts X$ be a discrete action whose Schreier graph on $X$ does not have property A and $\ve>0$. 
        Then there exists a positive ghost $T\in\Roe(X)$ of norm 1, a sequence of non-negative unit vectors $\xi_n\in \ell_2X$ which converges to 0 in weak topology, and invertible elements $\{h^\gamma\}_\gamma\subset\ell_\infty X$ satisfying that
        \begin{itemize}
            \item $\norm{T\xi_n-\xi_n}\leq\ve$,
            \item $\norm{h^\gamma\gamma\xi_n-\xi_n}\to0$ as $n\to\infty$.
        \end{itemize}
    \end{prop}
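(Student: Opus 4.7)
Although the statement as written reads ``has property A'', the construction plainly yields a non-compact ghost (since $\xi_n\to0$ weakly but $\norm{T\xi_n-\xi_n}\leq\ve<1$), so I read the hypothesis as ``does \emph{not} have property A'' and follow the informal construction given in the two paragraphs preceding the statement, being careful about uniformity in $\gamma$.

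First I would invoke the previous proposition to obtain a sparse diagonal ghost $T=\mathrm{SOT}\text{-}\sum_n P_{V_n}TP_{V_n}$, with each $T_n:=P_{V_n}TP_{V_n}$ positive of norm $1$, and pick unit eigenvectors $\xi_n\in\ell_2V_n$ with $T_n\xi_n=\xi_n$; sparseness forces $\xi_n\to 0$ weakly. A diagonal unitary $u\in\ell_\infty X$ absorbing the phase of $\xi_n$ on each $V_n$ conjugates $T$ to another ghost with the same sparse diagonal decomposition, so I may assume $\xi_n\geq0$ entrywise. To install the edge-ratio property I apply the preceding lemma to $\eta_n:=\xi_n^2\in\Prob(X)$ with parameter $\ve':=\ve^2/4$, producing $\eta_n'\in\Prob(X)$ that is $L$-ratio-bounded on edges and has exponential decay $\norm{P_V\eta_n'}\leq(\ve')^{\dist(V,V_n)}$. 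Setting $\xi_n':=\sqrt{\eta_n'}$ entrywise, the inequality $(\sqrt{a}-\sqrt{b})^2\leq|a-b|$ gives $\norm{\xi_n'-\xi_n}_2\leq\sqrt{\ve'}$, hence $\norm{T\xi_n'-\xi_n'}\leq 2\sqrt{\ve'}\leq\ve$ (using $T\xi_n=\xi_n$ and $\norm{T}=1$); I then relabel $\xi_n'$ as $\xi_n$.

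Next, I would choose a sequence $K_n\to\infty$ growing slowly enough that the thickenings $V_n':=\{x\in X:\dist(x,V_n)\leq K_n\}$ remain pairwise disjoint (possible since $\dist(V_m,V_n)\to\infty$); the decay estimate then yields $\norm{(1-P_{V_n'})\xi_n}_2\leq(\ve')^{K_n/2}\to 0$. For each $\gamma\in\Ga$ I define
\[
h^\gamma(x):=\xi_n(x)/\xi_n(\gamma^{-1}x)\ \text{for } x\in V_n',\qquad h^\gamma(x):=1\ \text{otherwise}.
\]
This is unambiguous because the $V_n'$ are disjoint, and $\xi_n>0$ on the connected component containing $V_n$ (the perturbed measure being a positive-weight sum of point masses on $V_n$). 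Iterating the edge-ratio bound along a shortest path of length $\leq|\gamma|$ between $x$ and $\gamma^{-1}x$ yields $h^\gamma(x)\in[L^{-|\gamma|/2},L^{|\gamma|/2}]$ uniformly in $n$, so $h^\gamma\in\ell_\infty X$ is invertible.

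Finally, to verify $\norm{h^\gamma\gamma\xi_n-\xi_n}\to 0$: on $V_n'$ the pointwise difference vanishes by construction, and off $V_n'$ the summand is at most $2L^{|\gamma|}|\xi_n(\gamma^{-1}x)|^2+2|\xi_n(x)|^2$, whose total equals $2L^{|\gamma|}\norm{(1-P_{\gamma V_n'})\xi_n}_2^2+2\norm{(1-P_{V_n'})\xi_n}_2^2$. For $n$ large, $K_n\geq|\gamma|$ gives $V_n\subset\gamma V_n'$, so a second application of the exponential-decay estimate (with a slightly smaller neighborhood) forces both terms to $0$. The main obstacle I anticipate is exactly this coordination: producing one invertible $h^\gamma\in\ell_\infty X$ for each $\gamma$ (not a sequence $h^\gamma_n$) while the vectors $\xi_n$ live on moving parts of $X$. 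The sparse structure of $\{V_n\}$ and the slow growth of $K_n$ are what glue the $n$-dependent local formulas into a single globally bounded and bounded-below function on $X$.
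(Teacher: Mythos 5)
Your proposal is correct and follows essentially the same route as the paper, which establishes this proposition via the two informal paragraphs preceding it (sparse diagonal ghost, eigenvectors made non-negative, perturbation of $(\xi_n)^2$ by the probability-measure lemma, and the ratio definition of $h^\gamma$ on thickened sparse sets); you have merely supplied the quantitative details (the $(\sqrt{a}-\sqrt{b})^2\leq\abs{a-b}$ estimate, the choice of $K_n$, and the uniform two-sided bound $L^{\pm\abs{\gamma}/2}$ on $h^\gamma$) that the paper leaves implicit. You are also right that the hypothesis is a typo for ``does \emph{not} have property A'', as the proposition is invoked later under exactly that assumption.
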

\section{Proof of Main Theorem}
To prove the main theorem, we mimic the proof in~\cite{Sak20} for $\bigsqcup_n V'_n$ as a \emph{generalized box space} in~\cite{Sak13b}. 
The key representation $(\pi,\cH,\Xi)$ appeared in~\cite{Sak20} as $(\pi_\infty,\cH_\infty,\Xi_\infty)$ is an analogy of the left regular representation in the box space case. 
The compression $\Phi_S$ in~\cite{Sak14} corresponds to the trace in that case and almost factors through $\pi$. We use the Hahn--Banach theorem to the vector state of $\Xi$.
    \subsection{Notations}
    For vectors $\xi,\xi'$ (or just complex numbers), we write $\xi\approx_\ve\xi'$ when $\norm{\xi-\xi'}\leq\ve$. 
    Denote by $\vp_\xi$ the vector state on $A$ by a unit vector $\xi\in\cH$ when a $\Alg$-alge\-bra $A\subset\IB(\cH)$ is concretely represented.
    
    For $S>0,x\in X$, we recall the compression $\Phi_S$ in~\cite{Sak14}
    \[\Phi_S:\Roe(X)\to \prod_{x\in X}\IB(\ell_2\Ball(x,S)),\ \Phi_S(a):=\left( P_{\Ball(x,S)}aP^*_{\Ball(x,S)}\right)_x,\]
    where $P_V$ stands for the orthogonal projection from $\ell_2X$ onto $\ell_2V$ for $V\subset X$ and $\Ball(x,S):=\set{y\in X}{d(x,y)\leq S}$. 
    
    Let $\cU$ be a (non-principle) ultrafilter on $\IN$, $\prod_\cU\HS(\ell_2X)$ be the ultrapower and $\mathrm{weak\text{-}}\lim_\cU:\prod_\cU\HS(\ell_2X)\to \HS(\ell_2X)$ be the map taking the weak limit via $\cU$. Define
    \[\cH_0:=\left(\bigcup_{R>0}\prod_\cU\HS^{\leq R}(\ell_2X)\right)\cap\ker\left(\mathrm{weak\text{-}}\lim_\cU\right)\ \subset\ \prod_\cU\HS(\ell_2X),\]
    where $\HS^{\leq R}(\ell_2X):=\HS(\ell_2X)\cap\IC^{\leq R}_{\mathrm{u}}(X)$ is the Hilbert space of $R$-propagation Hilbert--Schmidt operators and $\cH$ as the norm closure of $\cH_0$.
    Consider the left-right representation $\pi'$ of $\tensor{\Roe(X)}{\Alg(\lambda_X(\Ga))}$ on $\HS(\ell_2X)=\ell_2X\otimes\overline{\ell_2X}$, that is $\pi'(\tensor{a}{b})\Omega:=a\Omega b$ for $\Omega\in\HS(\ell_2X)$. 
    Since the ultrapower of $\pi'$ preserves finiteness of the propagation and $\mathrm{weak-}\lim_\cU$, we obtain the diagonal representation $\pi:\tensor{\Roe(X)}{\Alg(\lambda_X(\Ga))}\acts\cH$.
    
    Since we assume the negation of property A of $\abs{\Ga\acts X}$, we fix $T\in\Roe(X),\xi_n\in\ell_2X$ and $h^\gamma\in\ell_\infty X$ as in Lemma \ref{notation} throughout this section. 
    The vectors $\Xi_n:=\diag(\xi_n)\in\HS(\ell_2X)$ and $\Xi:=(\Xi_n)_{\cU}\in \cH\subset\prod_\cU\HS(\ell_2X)$ are unit vectors such that $(h^\gamma\gamma\xi_n)_{\cU}=(\xi_n)_{\cU}$ in the ultrapower.
    
    The vector state $\tau:=\vp_\Xi$ on $\tensor{\Roe(X)}{\Alg(\lambda_X(\Ga))}$ plays an important role as in the theory of amenable traces.
    $A:=\tensor{\pi_{\mathrm{left}}(\Roe(X))}{\Alg(\lambda_X(\Ga))}$ is faithfully represented on $\cH\otimes\overline{\ell_2X}$, 
    where we write $\pi_{\mathrm{left}}$ by the restriction of $\pi$ to $\Roe(X)\otimes\IC1$. If $\Alg(\lambda_X(\Ga))$ is exact, the following short sequence
    \[0\to\tensor{\ker\pi_{\mathrm{left}}}{\Alg(\lambda_X(\Ga))}\to\tensor{\Roe(X)}{\Alg(\lambda_X(\Ga))}\to\tensor{\pi_{\mathrm{left}}(\Roe(X))}{\Alg(\lambda_X(\Ga))}\to0\]
    is exact and $\pi$ factors through $A=\tensor{\pi_{\mathrm{left}}(\Roe(X))}{\Alg(\lambda_X(\Ga))}$. So $\tau$ is the state on $A$, which concretely represented on $\cH\otimes\overline{\ell_2X}$. As with the box space case, we only use exactness of $\Alg(\lambda_X(\Ga))$ in this step.
\subsection{Proof}
 \setcounter{thmA}{0}
\begin{thmA}[Main Theorem]
    For a discrete group $\Ga$ and an action on a set $X$, the Schreier graph $\abs{\Ga\acts X}$ has property A
    if and only if the $\Alg$-alge\-bra $\Alg(\lambda_X(\Ga))$ generated by the permutation representation $\lambda_X:\Ga\to \IB(\ell_2X)$ is exact.
\end{thmA}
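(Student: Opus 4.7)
If $\abs{\Ga\acts X}$ has property A, then for each generator $s\in S$ the operator $\lambda_X(s)\in\IB(\ell_2X)$ has matrix entries supported on $\{(sx,x):x\in X\}$, so $\lambda_X(s)\in\IC_{\mathrm u}^{\leq 1}(X)$ with respect to the Schreier coarse structure. Hence $\Alg(\lambda_X(\Ga))$ embeds as a $\Alg$-subalgebra of $\Roe\abs{\Ga\acts X}$, which is nuclear by the Skandalis-Tu-Yu theorem. Any $\Alg$-subalgebra of a nuclear $\Alg$-algebra is exact, so $\Alg(\lambda_X(\Ga))$ is exact.

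\textbf{Hard direction, strategy.} Assume $\Alg(\lambda_X(\Ga))$ is exact and, for contradiction, that $\abs{\Ga\acts X}$ fails property A. Invoke Proposition \ref{notation} to fix the sparse-diagonal positive ghost $T$ of norm one, unit vectors $\xi_n\in\ell_2X$ with $\xi_n\to 0$ weakly and $T\xi_n\approx\xi_n$, and invertible $h^\gamma\in\ell_\infty X$ with $h^\gamma\gamma\xi_n\to\xi_n$. Form $\Xi=(\diag\xi_n)_n\in\mathcal H$ and the diagonal representation $\pi$ of $\tensor{\Roe(X)}{\Alg(\lambda_X(\Ga))}$ on $\mathcal H$ as in the preliminaries. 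Following Sako's weighted-trace philosophy, the plan is to view the vector state $\vp_\Xi\circ\pi$ as a quasi-trace on $A=\tensor{\Roe\abs{\Ga\acts X}}{\Alg(\lambda_X(\Ga))}$, exploit exactness of $\Alg(\lambda_X(\Ga))$ to transfer its continuity to the minimal tensor norm, and then pull back a vector-state approximation to $\ell_2X$ in order to manufacture approximately $\Ga$-invariant probability measures on $X$, contradicting the failure of property A via the Reiter-type condition.

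\textbf{Hard direction, execution.} Exactness makes $\pi$ factor through the spatial norm on $A$, so $\vp_\Xi\circ\pi$ is min-continuous. A Hahn-Banach argument, in the spirit of the box-space toy example of the introduction, produces unit vectors $\zeta^i\in\mathcal H\otimes\overline{\ell_2X}\otimes\ell_2$ whose vector states approach $\vp_\Xi\circ\pi$ uniformly on prescribed finite subsets of $A$. The state $\vp_\Xi\circ\pi$ encodes, on the one hand, that the ghost $T$ acts near-identically on the diagonal supports of $\Xi$, and on the other, the twisted equivariance $h^\gamma\gamma\xi_n\approx\xi_n$. These two features transfer to the approximating $\zeta^i$: using the compression $\Phi_S$ to enforce finite propagation and taking absolute values on the $\overline{\ell_2X}\otimes\ell_2$ legs, extract from $\zeta^i$ a probability $\mu^i\in\Prob(X)$ that is approximately $\Ga$-invariant modulo the twist $h^\gamma$ and concentrated near the sparse blocks $V_n$ dictated by $T$. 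Running $\mu^i$ through the perturbation lemma of Section 2 produces a strictly positive, slowly-varying $\mu'^i$ realizing the Reiter-type characterization of property A for $\abs{\Ga\acts X}$, the desired contradiction.

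\textbf{Main obstacle.} The principal difficulty is the reduction step: passing from vector-state approximations $\zeta^i$ on the amplified space $\mathcal H\otimes\overline{\ell_2X}\otimes\ell_2$ down to honest probabilities on $X$ while preserving both approximate $\Ga$-invariance and concentration on the sparse blocks $V_n$. The multiplicative twist $h^\gamma$ must be absorbed in a way compatible with taking absolute values, and the finite-propagation plus vanishing-weak-limit conditions defining $\mathcal H_0$ must survive the reduction intact. Calibrating the quantitative parameters across the ghost $T$, the sparse blocks, the perturbation lemma, and the Hahn-Banach approximation so that all the required inequalities fit together is the analytic heart of the argument, and is where Sako's generalized box-space framework has to be invoked.
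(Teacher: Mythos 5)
Your easy direction and the first half of your hard direction match the paper: $\Alg(\lambda_X(\Ga))\subset\Roe\abs{\Ga\acts X}$ gives exactness from nuclearity, and the paper likewise uses exactness to push the diagonal representation $\pi$ through the minimal tensor product, applies Hahn--Banach (Lemma~\ref{HahnBanach}) to approximate $\vp_\Xi$ by vector states $\vp_{\zeta^i}$, and then reduces $\zeta^i$ by taking entrywise norms. But your endgame has a genuine gap. You collapse $\zeta^i$ all the way down to probability measures $\mu^i\in\Prob(X)$ and claim that approximate $\Ga$-invariance of this single sequence ``realizes the Reiter-type characterization of property A,'' contradicting the assumed failure of property A. Property A of $\abs{\Ga\acts X}$ is not characterized by the existence of an approximately invariant sequence in $\Prob(X)$ (that is an amenability-type condition for the action; property A requires a uniformly controlled family $x\mapsto\mu_x$ with $\supp\mu_x\subset\Ball(x,S)$), so no contradiction is reached this way. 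Moreover, collapsing to $\Prob(X)$ destroys exactly the structure the argument needs.

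The paper's reduction stops one level higher: $\eta^i_n(x,y):=\norm{\zeta^i_n(x,y)}_\cK$ defines unit vectors $\eta^i\in\cH_0$, i.e.\ ultrapower classes of finite-propagation Hilbert--Schmidt operators with vanishing weak limit. The contradiction is then obtained from the ghost $T$ itself, not from a Reiter condition: Lemma~\ref{lem} shows $\vp_{\eta^i}\to\lim_\cU\vp_{\xi_n}$ weak${}^*$ on $\Roe\abs{\Ga\acts X}$ (this is where $h^\gamma\gamma\eta^i\approx\eta^i$ and the agreement of $\vp_{\eta^i}$ with $\vp_{\xi_n}$ on $\ell_\infty X$ enter), while the final lemma shows that every vector state of a unit vector in $\cH_0$ factors through the compression $\Phi_S$ modulo $\bigoplus_{x}\IB(\ell_2\Ball(x,S))$ and hence annihilates every ghost; since $\lim_\cU\vp_{\xi_n}(T)\approx_\ve 1$, this is the contradiction. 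Your use of the perturbation lemma at the end is also misplaced: in the paper that lemma is applied beforehand, to $(\xi_n)^2$, in order to build the invertible multipliers $h^\gamma$ entering Proposition~\ref{notation}. You also omit the reduction to finitely generated $\Ga$, a minor but necessary preliminary step.
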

First, property A of $\abs{\Ga\acts X}$ implies exactness of $\Alg(\lambda_X(\Ga))$ since it is a subalge\-bra of the nuclear $\Alg$-alge\-bra $\Roe(X)$.

We only prove this theorem when $\Ga$ is finitely generated. Then the general case follows immediately 
because exactness passes to a subalge\-bra $\Alg(\lambda_X(\Lambda))\subset \Alg(\lambda_X(\Ga))$ and nuclearity is preserved under increasing union $\Alg(\ell_\infty X,\lambda_X(\Ga))=\lim_{\Lambda\in\mathcal{F}}\Alg(\ell_\infty X,\lambda_X(\Lambda))$, 
where $\mathcal{F}$ is the directed family of finitely generated subgroups of $\Ga$. 
Note that nuclearity of $\Alg(\ell_\infty X,\lambda_X(\Lambda))=\Roe\abs{\Lambda\acts X}$ is equivalent to property A of the Schreier graph $\abs{\Lambda\acts X}$ w.r.t. the restricted action of $\Lambda$ on $X$.

We assume that $\Alg(\lambda_X(\Ga))$ is exact and $\abs{\Ga\acts X}$ does not have property A and $\Ga$ is finitely generated.
Thus, we treat the Schreier graph $\abs{\Ga\acts X}$ as a graph.

We use the Hahn--Banach theorem (or a weak form of the Glimm's lemma) for $A:=\tensor{\pi_{\mathrm{left}}(\Roe(X))}{\Alg(\lambda_X(\Ga))}\subset\IB(\cH\otimes\overline{\ell_2X})$ and $\tau$ on $A$.
\begin{lem}\label{HahnBanach}
    Let $A\subset\IB(\cH)$ be a $\Alg$-subalge\-bra. Then the set of vector states $\vp_\zeta$ for $\zeta\in\cH\otimes \ell_2$ is dense in the state space of $A$ w.r.t. weak${}^*$-topology of $A^*$.
\end{lem}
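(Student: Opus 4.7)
The proof decomposes naturally into two steps: first showing that finite convex combinations of vector states from $\mathcal{H}$ are weak${}^*$-dense in the state space $S(A)$, and then observing that any such convex combination is itself a single vector state on the inflated Hilbert space $\mathcal{H}\otimes\ell_2$.

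For the first step, let $C\subset A^*$ denote the convex hull of $\{\varphi_\xi : \xi\in\mathcal{H},\ \|\xi\|=1\}$, and suppose toward contradiction that some $\varphi_0\in S(A)$ lies outside the weak${}^*$-closure of $C$. By Hahn–Banach separation applied in $(A^*,w^*)$, whose continuous dual is $A_{sa}$ (via the usual $a\mapsto(\psi\mapsto\psi(a))$ pairing), there exist a self-adjoint $a\in A$ and real numbers $r<s$ with $\psi(a)\leq r$ for every $\psi\in\overline{C}^{\,w^*}$ and $\varphi_0(a)>s$. Specializing $\psi=\varphi_\xi$ gives $\langle a\xi,\xi\rangle\leq r$ for every unit vector $\xi\in\mathcal{H}$; since $a$ is self-adjoint this means $a\leq r\cdot1$ as operators, hence $\varphi_0(a)\leq r<s$, a contradiction.

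For the second step, given any finite convex combination $\varphi=\sum_{i=1}^n\lambda_i\varphi_{\xi_i}$ with $\lambda_i\geq 0$, $\sum\lambda_i=1$, $\|\xi_i\|=1$, fix an orthonormal family $\{e_i\}_{i=1}^n$ in $\ell_2$ and set
\[
\zeta:=\sum_{i=1}^n\sqrt{\lambda_i}\,\xi_i\otimes e_i\ \in\ \mathcal{H}\otimes\ell_2.
\]
With $A$ acting on $\mathcal{H}\otimes\ell_2$ via $a\mapsto a\otimes 1$, we have $\|\zeta\|^2=\sum_i\lambda_i=1$ and
\[
\varphi_\zeta(a)=\inpro{(a\otimes 1)\zeta,\zeta}=\sum_{i,j}\sqrt{\lambda_i\lambda_j}\,\inpro{a\xi_i,\xi_j}\,\inpro{e_i,e_j}=\sum_i\lambda_i\inpro{a\xi_i,\xi_i}=\varphi(a),
\]
so every finite convex combination of vector states on $\mathcal{H}$ is literally a vector state on $\mathcal{H}\otimes\ell_2$. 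Combining the two steps, the vector states $\{\varphi_\zeta : \zeta\in\mathcal{H}\otimes\ell_2,\ \|\zeta\|=1\}$ are weak${}^*$-dense in $S(A)$.

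There is no real obstacle here; the argument is a textbook Hahn–Banach computation, the only mild subtlety being the use of self-adjointness to pass from the numerical-range bound $\langle a\xi,\xi\rangle\leq r$ to the operator inequality $a\leq r\cdot 1$. The reason the lemma is useful in this paper is precisely the tensoring-by-$\ell_2$ trick, which converts the inherently convex object "state" into the concrete object "unit vector" that the later arguments manipulate via the representation $\pi$ on $\mathcal{H}\otimes\overline{\ell_2X}$.
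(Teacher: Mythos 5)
Your argument is correct and is exactly the standard Hahn--Banach separation proof that the paper leaves implicit: Lemma~\ref{HahnBanach} is stated in the paper without any proof, so there is no ``paper route'' to compare against beyond the hint in its name, and your two-step decomposition (separation to get density of convex combinations of vector states, then the amplification $\zeta=\sum_i\sqrt{\lambda_i}\,\xi_i\otimes e_i$ to absorb convex combinations into a single vector state on $\cH\otimes\ell_2$) is the intended one. The one step you should patch is the final implication ``$a\leq r\cdot 1$ as operators, hence $\vp_0(a)\leq r$'': the inequality $a\leq r\cdot 1_{\IB(\cH)}$ takes place in $\IB(\cH)$, whose unit need not belong to $A$, so you cannot directly evaluate the state $\vp_0$ on $r\cdot 1-a$. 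This is harmless but deserves a line: either extend $\vp_0$ to a state $\tilde\vp_0$ on $\IB(\cH)$ (always possible for a state on a $\Alg$-subalgebra) and apply $\tilde\vp_0$ to $r\cdot 1-a\geq 0$, or note that for $r\geq 0$ one has $r\cdot 1-a\geq 0$ already in the unitization of $A$, while $r<0$ forces $-a$ to be positive and invertible in $\IB(\cH)$, hence $1_{\IB(\cH)}\in \Alg(a)\subset A$ and the evaluation is again legitimate. With that sentence added, the proof is complete; the rest (identification of the weak$^*$-continuous dual of $A^*_{sa}$ with $A_{sa}$, the numerical-range computation, and the norm and state computations for $\zeta$) is accurate.
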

Take unit vectors $\zeta^i\in\cH\otimes\overline{\ell_2X}\otimes \ell_2$ such that $\vp_{\zeta^i}\to \tau=\vp_\Xi$. Define $\cK:=\overline{\ell_2X}\otimes \ell_2$. Since we may assume that $\zeta^i$ are in the alge\-braic tensor product $\cH_0\odot\cK$, take lifts $(\zeta^i_n)_n$ of $\zeta^i$ in $\prod_n\Big(\HS(\ell_2X)\odot\cK\Big)$, and define unit vectors $\eta^i\in\cH_0$ as $\eta^i_n(x,y):=\norm{\zeta^i_n(x,y)}_\cK, \eta^i:=(\eta^i_n)_{\cU}$, where $\zeta^i_n(x,y)\in \cK$ is the $(x,y)$-entry of $\zeta^i_n\in\HS(\ell_2X)\otimes\cK$.

Let us now sort out what vectors define the vector states on which alge\-bras. The vectors $\Xi\in\cH$ and $\zeta^i\in\cH\otimes\cK$ define states on $A$, $\xi_n\in \ell_2X$ and $\eta^i\in\cH$ define states on $\Roe(X)$.
\begin{lem}\label{lem}
    For $\ve>0,\ \gamma\in\Ga,\ f\in \ell_\infty X\subset\Roe(X)$, and sufficiently large $i$, the following hold.
    \begin{itemize}
        \item $\inpro{\eta^i,h^\gamma\gamma\eta^i}\geq\abs{\vp_{\zeta^i}(\tensor{h^\gamma\gamma}{\gamma^{-1}})}\approx_\ve \vp_\Xi(\tensor{h^\gamma\gamma}{\gamma^{-1}})=1$.
        \item $\norm{h^\gamma\gamma\eta^i}^2=\vp_{\zeta^i}(\abs{\tensor{h^\gamma\gamma}{\gamma^{-1}}}^2)\approx_\ve \vp_\Xi(\abs{\tensor{h^\gamma\gamma}{\gamma^{-1}}}^2)=1$.
        \item $\inpro{\eta^i,f\eta^i}=\vp_{\zeta^i}(f)\approx_\ve\vp_\Xi(f)=\lim_{\cU}\vp_{\xi_n}(f)$.
    \end{itemize}
\end{lem}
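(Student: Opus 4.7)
The plan is to verify each bullet by unpacking, in matrix-entry form, the action of $A$ on $\cH\otimes\cK$ and translating between inner products involving $\zeta^i$ and those involving $\eta^i$. The key observation is that for $\tensor{a}{b}\in A$, its action on $\zeta^i_n\in\HS(\ell_2X)\odot\cK$ factors as left multiplication by $a$ on the $\HS$-factor tensored with a unitary $U_b$ on $\cK$ coming from the opposite action of $b\in\Alg(\lambda_X(\Ga))$ on the $\overline{\ell_2X}$-summand of $\cK$; concretely, at entry $(z,w)$ one has $(\tensor{a}{b}\zeta^i_n)(z,w)=\sum_y a(z,y)\,U_b(\zeta^i_n(y,w))\in\cK$. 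Because $U_b$ preserves $\cK$-norms, termwise estimates pass cleanly from $\zeta^i_n$ to $\eta^i_n(x,y)=\norm{\zeta^i_n(x,y)}_\cK$. The $\approx_\ve$ steps in each bullet are automatic from $\vp_{\zeta^i}\to\vp_\Xi$ applied to the fixed element.

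Bullets two and three are equalities computed directly. For the third, $\tensor{f}{1}$ with $f\in\ell_\infty X$ multiplies the $(x,y)$-entry by $f(x)$ with no action on $\cK$, so $\vp_{\zeta^i}(f)=\sum_{x,y}f(x)\norm{\zeta^i_n(x,y)}_\cK^2=\inpro{\eta^i_n,f\eta^i_n}$, and the $\Xi$-value unfolds via $\Xi_n=\diag(\xi_n)$ to $\lim_\cU\sum_x f(x)\abs{\xi_n(x)}^2=\lim_\cU\vp_{\xi_n}(f)$. For the second, the same style of computation yields
\[
    \norm{\tensor{h^\gamma\gamma}{\gamma^{-1}}\zeta^i_n}^2=\sum_{z,w}\abs{h^\gamma(z)}^2\norm{U_\gamma\zeta^i_n(\gamma^{-1}z,w)}_\cK^2=\sum_{z,w}\abs{h^\gamma(z)}^2\eta^i_n(\gamma^{-1}z,w)^2=\norm{h^\gamma\gamma\,\eta^i_n}^2,
\]
and $\vp_\Xi(\abs{\tensor{h^\gamma\gamma}{\gamma^{-1}}}^2)=\lim_\cU\norm{h^\gamma\gamma\xi_n}^2=1$ follows from $(h^\gamma\gamma\xi_n)=(\xi_n)$ in the ultrapower of $\ell_2X$.

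For the first bullet, expanding gives
\[
    \vp_{\zeta^i}(\tensor{h^\gamma\gamma}{\gamma^{-1}})=\lim_\cU\sum_{z,w}h^\gamma(z)\inpro{\zeta^i_n(z,w),\,U_\gamma\zeta^i_n(\gamma^{-1}z,w)}_\cK.
\]
Applying Cauchy-Schwarz in $\cK$ entrywise, using unitarity of $U_\gamma$ and positivity of $h^\gamma$ (which follows from the construction $h^\gamma(x)=\xi_n(x)/\xi_n(\gamma^{-1}x)>0$ on $\bigsqcup V'_n$ and $h^\gamma=1$ elsewhere), bounds the absolute value by $\sum_{z,w}h^\gamma(z)\eta^i_n(z,w)\eta^i_n(\gamma^{-1}z,w)=\inpro{\eta^i_n,h^\gamma\gamma\,\eta^i_n}$, whose ultralimit is $\inpro{\eta^i,h^\gamma\gamma\eta^i}$. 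The $\Xi$-value $\vp_\Xi(\tensor{h^\gamma\gamma}{\gamma^{-1}})=\lim_\cU\inpro{\xi_n,h^\gamma\gamma\xi_n}=1$ is immediate from the same ultrapower identity.

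The main point requiring care is verifying that the opposite action of $\gamma^{-1}$ on $\cK$ is unitary and acts on a factor separate from the HS column index $w$; this is what guarantees the termwise Cauchy-Schwarz yields precisely $\inpro{\eta^i,h^\gamma\gamma\eta^i}$ (with the $w$-index unchanged in the second factor) rather than a looser bound with $w$ replaced by $\gamma^{-1}w$. Once this splitting is in place, everything else is routine entry-wise bookkeeping.
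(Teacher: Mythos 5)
Your proposal is correct and follows essentially the same route as the paper's proof: both unpack the action of $\tensor{a}{b}$ entrywise on lifts $\zeta^i_n\in\HS(\ell_2X)\odot\cK$, use that the opposite action of $\gamma^{-1}$ is a unitary on the $\cK$-factor (leaving the HS column index fixed) together with positivity of $h^\gamma$ to run the termwise Cauchy--Schwarz for the first bullet, compute the second and third bullets as exact equalities, and obtain the $\approx_\ve$ steps from $\vp_{\zeta^i}\to\vp_\Xi$ and the $=1$ values from $(h^\gamma\gamma\xi_n)=(\xi_n)$ in the ultrapower. The point you flag as requiring care --- that $U_{\gamma^{-1}}$ acts unitarily on a tensor factor disjoint from the column index $w$ --- is exactly the implicit mechanism in the paper's displayed computation.
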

\begin{proof}
    Note that $(\tensor{h^\gamma\gamma}{\gamma^{-1}})\Xi=h^\gamma\gamma\Xi\gamma^{-1}=\Xi$ and $\inpro{\Xi_n,f\Xi_n}=\inpro{\xi_n,f\xi_n}$. One has
    \begin{align*}
        \inpro{\eta^i,h^\gamma\gamma\eta^i} &= \lim_{\cU}\sum_{x,y}h^\gamma(x)\eta^i_n(x,y)\eta^i_n(\gamma^{-1}x,y)\\
        &\geq\lim_{\cU}\abs{\sum_{x,y}h^\gamma(x)\inpro{\zeta^i_n(x,y),\overline{\gamma}\zeta^i_n(\gamma^{-1}x,y)}_{\cK}}\\
        &=\abs{\inpro{\zeta^i,(\tensor{h^\gamma\gamma}{\gamma^{-1}})\zeta^i}},
    \end{align*}
    \begin{align*}
        \norm{h^\gamma\gamma\eta^i}^2 &= \lim_{\cU}\sum_{x,y}h^\gamma(x)^2\eta^i_n(\gamma^{-1}x,y)^2\\
        &=\lim_{\cU}\sum_{x,y}h^\gamma(x)^2\norm{\overline{\gamma}\zeta^i_n(\gamma^{-1}x,y)}^2_{\cK}\\
        &=\norm{(\tensor{h_\gamma\gamma}{\gamma^{-1}})\zeta^i}^2,
    \end{align*}
    \begin{align*}
        \inpro{\eta^i,f\eta^i} &= \lim_{\cU}\sum_{x,y}f(x)\eta^i_n(x,y)^2\\
        &=\lim_{\cU}\sum_{x,y}f(x)\norm{\zeta^i_n(x,y)}^2_{\cK}\\
        &=\inpro{\zeta^i,f\zeta^i},
    \end{align*}
    and $\vp_{\zeta^i}(x)\approx_\ve \tau(x)=\vp_{\Xi}(x)$ for $x\in A$ and suffciently large $i$.
\end{proof}
Thus, $h^\gamma\gamma\eta^i\approx_{2\sqrt{\epsilon}}\eta^i$ and $\vp_{\eta^i}(f)\approx_{\ve}\lim_{\cU}\vp_{\xi_n}(f)$ for $f\in \ell_\infty X$. One has
\begin{align*}
    \vp_{\eta^i}(fh^\gamma\gamma) &= \inpro{\eta^i,fh^\gamma\gamma\eta^i}\\
    &\approx_{2\sqrt{\epsilon}}\inpro{\eta^i,f\eta^i}\\
    &\approx_{\ve}\lim_{\cU}\inpro{\xi_n,f\xi_n}\\
    &=\lim_{\cU}\inpro{\xi_n,fh^\gamma\gamma\xi_n}\\
    &=\lim_{\cU}\vp_{\xi_n}(fh^\gamma\gamma).
\end{align*}
Therefore, $\vp_{\eta^i}\to\lim_{\cU}\vp_{\xi_n}$ in the weak${}^*$-topology of $\Roe(X)^*$.
\begin{lem}
    For every unit vector $\eta\in\cH_0$, there exists $S>0$ such that
    $\abs{\vp_\eta(a)}\leq\norm{\Phi_S(a)\mod\bigoplus_{x\in X}\IB(\ell_2\Ball(x,S))}$ for every $a\in\Roe(X)$.
\end{lem}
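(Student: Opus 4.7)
The plan is to exploit that $\eta\in\cH_0$ has a representative of finite propagation, so that each ``column'' $\eta_n(\cdot,y)$ is supported in a ball around $y$. The Hilbert--Schmidt pairing $\inpro{\eta_n,a\eta_n}_\HS$ then collapses into a column-indexed sum in which $a$ enters only through its ball-compressions $P_{\Ball(y,S)}aP_{\Ball(y,S)}$, and the weak-nullity hypothesis $\lim_\cU\eta_n=0$ is used at the end to discard finitely many ``local'' columns.

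Concretely, pick a lift $\eta=(\eta_n)_n$ with $\eta_n\in\HS^{\leq R}(\ell_2X)$ and set $S:=R$. For $a\in\Roe\abs{\Ga\acts X}$ of finite propagation, expand
\[
\inpro{\eta_n,a\eta_n}_\HS=\sum_{x,y,z}\overline{\eta_n(x,y)}\,a(x,z)\,\eta_n(z,y);
\]
the propagation bound on $\eta_n$ forces $x,z\in\Ball(y,R)\subset\Ball(y,S)$, so $a(x,z)$ may be replaced by the corresponding entry of $a_y:=P_{\Ball(y,S)}aP_{\Ball(y,S)}$ and the sum factors column by column:
\[
\inpro{\eta_n,a\eta_n}_\HS=\sum_y\inpro{\eta_n(\cdot,y),\,a_y\,\eta_n(\cdot,y)}_{\ell_2X}.
\]
The triangle inequality then bounds its modulus by $\sum_y\norm{a_y}\,\norm{\eta_n(\cdot,y)}^2$, with total weight $\norm{\eta_n}_\HS^2=1$.

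Given $\ve>0$, set $M:=\norm{\Phi_S(a)\mod\bigoplus_x\IB(\ell_2\Ball(x,S))}$ and choose a finite $F\subset X$ with $\norm{a_y}\leq M+\ve$ for $y\notin F$. Splitting the sum at $F$, the tail is bounded by $M+\ve$, while the head is controlled by $\norm{a}\sum_{y\in F}\norm{\eta_n(\cdot,y)}^2$, a sum of $\abs{\eta_n(x,y)}^2$ over the \emph{finite} set $\{(x,y):y\in F,\,x\in\Ball(y,S)\}$ (balls are finite under the ulf assumption). Here weak-nullity of $(\eta_n)$ along $\cU$ combined with uniform boundedness forces entrywise vanishing, so the head dies in the ultralimit, giving $\abs{\vp_\eta(a)}\leq M+\ve$; letting $\ve\to 0$ and then approximating general $a\in\Roe\abs{\Ga\acts X}$ by finite-propagation elements (using norm-continuity of $\Phi_S$ and of $\vp_\eta$) yields the stated inequality. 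The main obstacle is simply the matrix bookkeeping in the first step, namely securing the column identity with a single scale $S$ independent of $a$; once that is in hand, the remainder is a routine ``weak-nullity kills finite supports'' argument.
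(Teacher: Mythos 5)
Your proposal is correct and follows essentially the same route as the paper's proof: both use the column decomposition $\inpro{\eta_n,a\eta_n}_{\HS}=\sum_y\inpro{\eta_n(\cdot,y),P_{\Ball(y,S)}aP_{\Ball(y,S)}\eta_n(\cdot,y)}$ (valid because the finite-propagation representative has columns supported in $S$-balls) to realize $\vp_\eta$ as a vector state composed with $\Phi_S$, and then use $\lim_\cU\eta_n=0$ to kill the contribution of the ideal $\bigoplus_{x}\IB(\ell_2\Ball(x,S))$. The only cosmetic difference is that you run the quotient-norm estimate by hand with an explicit $\ve$-splitting at a finite set $F$, where the paper simply notes that the resulting state vanishes on the $c_0$-sum and hence descends to the quotient.
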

This lemma is observed in \cite[Theorem 3.17]{Sak20}.
\begin{proof}
    Take $S>0$ as $\eta\in\prod_{\cU}\HS^{\leq S}(\ell_2X)$.
    Define $\omega^y_n\in \ell_2\Ball(y,S)$ as $\omega^y_n(x):=\eta_n(x,y)$ and unit vectors $\omega_n:=\oplus_y\omega^y_n\in\bigoplus_y \ell_2\Ball(y,S)$. 
    \[\vp_\eta(a)=\lim_{\cU}\inpro{\eta_n,a\eta_n}_{\HS(X)}=\lim_{\cU}\sum_y\inpro{\omega^y_n,a\omega^y_n}_{\ell_2X}=\lim_{\cU}\inpro{\omega_n,\Phi_S(a)\omega_n}_{\bigoplus\ell_2\Ball(y,S)}\]
    shows that $\vp_\eta$ factors through $\Phi_S$. Since $\text{weak-}\lim_\cU\eta_n=0$, so does $\omega_n\rightharpoonup0\in\bigoplus_y \ell_2\Ball(y,S)$. Note that compact operators vanish on $\lim_\cU \vp_{\omega_n}$.
\end{proof}
\begin{proof}[Proof of Main Theorem.]
    For the ghost $T\in\Roe(X)$, one has $\Phi_S(T)\in\bigoplus_{x\in X}\IB(\ell_2\Ball(x,S))$ for all $S>0$ and $\vp_{\eta^i}(T)=0$ for all $i$ by the previous lemma. Recall the ghost $T$ satisfies $\lim_{\cU}\vp_{\xi_n}(T)\approx_\ve1$ and this gives a contradiction to the weak${}^*$-convergence $\vp_{\eta^i}\to\lim_\cU\vp_{\xi_n}$. We obtain the main theorem.
\end{proof}
\begin{rem}
    Note that the kernel of the left representation $\pi_{\mathrm{left}}:\Roe(X)\acts\cH$ is $\Ghost(X)$, the ideal of ghosts in $\Roe(X)$ by the previous lemma.
\end{rem}
\begin{proof}
    As the previous lemma, we prove the following equality for $S>0, a\in(\Roe(X))_+$:
    \begin{align*}
        &\sup\Big\{\vp_\eta(a):\eta\in\cH\cap\prod_{\cU}\HS^{\leq S}(\ell_2X),\norm{\eta}=1\Big\}\\
        =&\sup\Big\{\lim_\cU\inpro{\omega_n,\Phi_S(a)\omega_n}:\omega_n\in\bigoplus_y \ell_2\Ball(y,S),\norm{\omega_n}=1,\omega_n\rightharpoonup0 \Big\}\\
        =&\norm{\Phi_S(a)\mod\bigoplus_{x\in X}\IB(\ell_2\Ball(x,S))}.
    \end{align*}
    Indeed, for $\eta\in\prod_{\cU}\HS^{\leq S}(\ell_2X)$, $\eta\in\cH$ is equivalent to $\mathrm{weak\text{-}}\lim_\cU\eta=0\in\HS(X)$ and to the weak convergense $\omega_n\rightharpoonup0\in\bigoplus_y \ell_2\Ball(y,S)$.
    So $a\in\ker(\pi_{\mathrm{left}})$ if and only if $\Phi_S(a)\in\bigoplus_{x\in X}\IB(\ell_2\Ball(x,S))$ for all $S>0$, which means $\Big(a(x,y)\Big)_{x,y}\in c_0(X\times X)$. Note that $a$ is an approximately finite propagation operator.
\end{proof}
\begin{rem}
     In the proofs, we only use exactness for 
    \[\tensor{(\Roe(X)/\Ghost(X))}{\Alg(\lambda_X(\Ga))}=\left(\tensor{\Roe(X)}{\Alg(\lambda_X(\Ga))}\right)\Big/\left(\tensor{\Ghost(X)}{\Alg(\lambda_X(\Ga))}\right).\]
    Since we can use $\mathrm{C}_{\mathrm{full}}^*(\Ga)$ instead of $\Alg(\lambda_X(\Ga))$ in the min-continuity, the continuity of
    \[\tensor{(\Roe(X)/\Ghost(X))}{\mathrm{C}_{\mathrm{full}}^*(\Ga)}\to\left(\tensor{\Roe(X)}{\Alg(\lambda_X(\Ga))}\right)\Big/\left(\tensor{\Ghost(X)}{\Alg(\lambda_X(\Ga))}\right)\]
    implies property A of $\abs{\Ga\acts X}$.
\end{rem}
\begin{corA}[Sako]\label{cor:Sako}
    For a ulf coarse space $X$, local reflexivity of $\Roe(X)$ implies property A of $X$.
\end{corA}
\begin{proof}
    Realize $X$ as a Schreier graph of some action $\Ga\acts X$. By the previous remark, it suffices to show that local reflexivity of $\Roe(X)$ implies
    \[\tensor{(\Roe(X)/\Ghost(X))}{\Alg(\lambda_X(\Ga))}=\left(\tensor{\Roe(X)}{\Alg(\lambda_X(\Ga))}\right)\Big/\left(\tensor{\Ghost(X)}{\Alg(\lambda_X(\Ga))}\right).\]
    This follows from general facts on $\Alg$-algebras~\cite[Theorem 3.2]{EH85}:\\
    If the $\Alg$-algebra $B$ is locally reflexive and $K$ is an ideal of $B$, then for every $\Alg$-algebra $C$, the naturally defined sequence
    \[0\to K\otimes C\to B\otimes C\to (B/K)\otimes C\to 0\]
    is exact.
\end{proof}
\section{Appendix}\label{appendix}
This appendix shows that every ulf coarse space comes from some group action. 
\begin{defn}[coarse space]
    A family of subsets $\mathcal{E}\subset 2^{X\times X}$ is called a \emph{coarse structure} on a set $X$ when $\mathcal{E}$ forms an ideal by the inclusion order (i.e., being a lower and directed set) and satisfies the following group-like axioms:
    \begin{itemize}
        \item $\Delta_X:=\{(x,x)\in X\times X\ \mid\ x\in X\}\in\mathcal{E}$;
        \item $E\circ F:=\{(x,z)\in X\times X\ \mid\ (x,y)\in E, (y,z)\in F\}\in\mathcal{E}$ for every $E,F\in\mathcal{E}$;
        \item $E^{-1}:=\{(y,x)\in X\times X\ \mid\ (x,y)\in E\}\in\mathcal{E}$ for every $E\in\mathcal{E}$.
    \end{itemize}
\end{defn}
\begin{ex}[extended metric space, graph]
    Let $(X,d)$ be a metric space (possibly $d$ takes a value in $[0,\infty]$). We associate the metric coarse structure
    \[\mathcal{E}:=\{E\subset X\times X\ \mid\ d\ \text{is bounded on}\ E\}.\]
    Especially, when $d$ comes from the graph metric of a graph $(X, E)$, this coincides with the coarse structure generated by $E$, i.e. the smallest coarse structure containing $E$.
\end{ex}
Note that a coarse space $(X,\mathcal{E})$ comes from a graph if and only if $\mathcal{E}$ is finitely generated (actually, singly generated), 
and $(X,\mathcal{E})$ comes from an extended metric if and only if $\mathcal{E}$ is countably generated. 
To see this, take a generating set $\{E_n\}_{n=0}^\infty$ as $E_0=\Delta_X,\ E_n\circ E_m\subset E_{n+m},\ E^{-1}_n=E_n$, and define $d(x,y):=\min\set{n}{(x,y)\in E_n}$.\\
We define the Schreier graph (coarse space) for a general discrete group and its action.
\begin{defn}[Schreier graph]
    For a discrete group action $\Ga\acts X$, we define the Schreier graph $\abs{\Ga\acts X}$ 
    as the ulf coarse structure generated by $\{\graph\gamma\}_{\gamma\in\Ga}$, 
    where $\graph\gamma:=\set{(\gamma x,x)\in X\times X}{x\in X}$
\end{defn}
\begin{defn}[ulf]
    For a coarse space $(X,\mathcal{E})$, we assume that $\sup_x\abs{S\cap(X\times\{x\})}<\infty$ for every $S\in\mathcal{E}$, which is called \emph{uniformly locally finite}, or \emph{ulf} in short.
\end{defn}
\begin{defn}[uniform Roe algebra] The $\Alg$-algebra
    \[\Roe(X,\mathcal{E}):=\overline{\bigcup_{S\in\mathcal{E}}\set{a\in\IB(\ell_2X)}{a(x,y)=0\ \text{if}\ (x,y)\notin S}}\]
    coincides with our definition of the uniform Roe algebra in the graph case.
\end{defn}
\begin{lem}[edge coloring]
    Let $S\subset X\times X$ be a symmetric subset containing $\Delta_X$. If $d:=\sup_{x\in X}\abs{S\cap(\{x\}\times X)}<\infty$, there are involutions $\{\gamma_i\}_{i=1}^{2d-1}$ on $X$ such that $S$ is the union of $\graph \gamma_i$.
\end{lem}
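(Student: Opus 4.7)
My plan is to encode $S$ as a multigraph on $X$ and invoke a greedy edge-coloring argument. First I would form the multigraph $H$ on vertex set $X$ by attaching one loop at every $x$ (available since $(x,x)\in\Delta_X\subset S$) and, for each symmetric pair $\{(x,y),(y,x)\}\subset S$ with $x\neq y$, one edge $\{x,y\}$. With loops counted as contributing $1$ to the degree of their vertex, one then has $\deg_H(x)=\abs{S\cap(\{x\}\times X)}\leq d$. The key observation is that decompositions $S=\bigcup_i\graph\gamma_i$ with $\gamma_i$ involutions on $X$ correspond to proper edge-colorings of $H$: since $\graph\gamma\cap(\{x\}\times X)$ is a singleton for any involution $\gamma$, each color class must be a set of vertex-disjoint edges and loops, and any such set uniquely determines an involution by swapping matched pairs and fixing all remaining points.

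Next I would greedily color the edges and loops of $H$ from the palette $\{1,\dots,2d-1\}$. An edge or loop of $H$ meets at most $d-1$ other edges at each of its endpoints, hence at most $2(d-1)=2d-2$ others in total, so at every step some color in the palette is still free at its neighbors. For countable $X$ this is a straight induction along a listing of the edges; in general one well-orders the edges of $H$ and runs the same argument transfinitely, using only that each already-colored edge contributes a blockage of at most $2d-2$ colors.

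Finally I would set $\gamma_i(x):=y$ whenever some edge $\{x,y\}$ receives color $i$, and $\gamma_i(x):=x$ otherwise, so that loops of color $i$ and points untouched by any color-$i$ edge both become fixed points. Each $\gamma_i$ is then an involution, and $\graph\gamma_i\subset S$ because swap-pairs come from edges of $S$ while the added fixed points only contribute diagonal elements in $\Delta_X\subset S$. Conversely, every off-diagonal $(x,y)\in S$ lies in $\graph\gamma_i$ for the unique $i$ coloring $\{x,y\}$, and every diagonal $(x,x)\in S$ lies in $\graph\gamma_i$ for the $i$ coloring the loop at $x$, so $S=\bigcup_{i=1}^{2d-1}\graph\gamma_i$.

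The only real subtlety I expect is running the greedy step on a possibly uncountable $X$; a transfinite induction along a well-order of the edges of $H$ handles it cheaply, and everything else is bookkeeping about how a vertex-disjoint set of edges and loops in $H$ canonically extends to an involution on all of $X$.
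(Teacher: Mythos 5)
Your proposal is correct and is essentially the paper's own argument: a greedy proper edge-coloring of $S$ (loops included) with $2d-1$ colors, justified by the count that each edge is adjacent to at most $2(d-1)=2d-2$ others, followed by reading off each $\gamma_i$ as the involution swapping the color-$i$ matched pairs and fixing everything else. The only cosmetic difference is that you run the greedy step by transfinite induction along a well-order of the edges, whereas the paper takes a maximal partial coloring via Zorn's lemma and extends it at any uncolored pair to reach a contradiction.
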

This is well-known. We include a proof for the reader's convenience.
\begin{proof}
    Consider a partially defined symmetric map $c: S\to\{1,\dots,2d-1\}$ as edge $(2d-1)$-coloring (i.e. adjacent edges have distinct colors) and take a maximal one w.r.t. the inclusion order of its domains. We claim that $c$ is totally defined. Suppose not and take $(x,y)$ from complement of its domain. Since $S':=(S\setminus\{(x,y)\})\cap(\{x\}\times X\cup X\times\{y\})$ has the cardinality less than $2d-1$, we take $i\in\{1,\dots,2d-1\}\setminus c(S')$. By extending $c$ as $c(x,y),c(y,x):=i$, we have a contradiction.\\
    Then the reflection $\gamma_i$ on $c^{-1}(i)$ (identity on its complement) works well.
\end{proof}
So the free product of $\IZ/2\IZ$ action gives the following.
\begin{rem}
    Every ulf coarse space is realized as the Schreier graph of (possibly infinitely generated) some group action. 
    When the coarse space comes from a graph (i.e., finitely generated as a coarse space), the group can be taken to be finitely generated.
\end{rem}
\begin{prop}
    Let $\Ga$ be a family of partial translations on a set $X$. Then, $\Alg(\Ga)\subset\IB(\ell_2X)$ is exact if and only if the coarse structure $(X,\mathcal{E})$ generated by $\Ga$ has property A.
\end{prop}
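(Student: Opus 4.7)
The plan is to reduce the partial-translation setting to the Schreier-graph setting of Theorem~\ref{thm:A} via the standard $2\times 2$ trick, replacing each partial isometry by an honest permutation at the cost of doubling $X$.

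Concretely, I would set $\tilde X := X \times \{0,1\}$ and, without loss of generality, adjoin the identity on $X$ to $\Ga$; this changes neither $\Alg(\Ga)$ nor the generated coarse structure $\mathcal{E}$. For each $\gamma \in \Ga$, regarded as a partial isometry $v = v_\gamma \in \IB(\ell_2 X)$ with $v^*v = p_{D(\gamma)}$ and $vv^* = p_{R(\gamma)}$, let $\tilde\gamma$ be the involution of $\tilde X$ that swaps $(x,0) \leftrightarrow (\gamma x, 1)$ for $x \in D(\gamma)$ and fixes every other point. Under the identification $\ell_2 \tilde X = \ell_2 X \oplus \ell_2 X$, the operator $\tilde\gamma$ becomes the self-adjoint unitary
\[
\tilde\gamma = \begin{pmatrix} 1 - v^*v & v^* \\ v & 1 - vv^* \end{pmatrix}.
\]
I then let $G$ be the group of permutations of $\tilde X$ generated by $\{\tilde\gamma\}_{\gamma\in\Ga}$, and let $\tilde{\mathcal E}$ be the coarse structure on $\tilde X$ generated by $\{\graph\tilde\gamma\}$.

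The proof then proceeds by verifying two compatibility statements. On the coarse side, once $\id \in \Ga$ is adjoined the swap $(x,0) \leftrightarrow (x,1)$ is always available, so the inclusion $X \times \{0\} \hookrightarrow \tilde X$ is a coarse equivalence, and products of $\tilde\gamma$'s compressed to the first copy realize exactly the words in $v_\gamma^{\pm 1}$; hence $\tilde{\mathcal E}$ restricts to $\mathcal E$ on $X \times \{0\}$. Since property A is coarsely invariant, it transfers between $(X, \mathcal E)$ and $(\tilde X, \tilde{\mathcal E})$. On the $\Alg$-side, the matrix form above gives $\Alg(\lambda_{\tilde X}(G)) \subset M_2(\Alg(\Ga)^\sim)$, while the corner cut by the projection $p$ onto $X \times \{0\}$ satisfies $p\, \Alg(\lambda_{\tilde X}(G))\, p = \Alg(\Ga)^\sim$. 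Since exactness passes to $\Alg$-subalgebras, unitizations, matrix amplifications, and hereditary subalgebras, $\Alg(\Ga)$ is exact if and only if $\Alg(\lambda_{\tilde X}(G))$ is exact. Applying the Main Theorem to the group action $G \acts \tilde X$ then closes the loop.

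The step I expect to require the most care is the corner identification $p\, \Alg(\lambda_{\tilde X}(G))\, p = \Alg(\Ga)^\sim$: one must expand products $p \tilde\gamma_1 \cdots \tilde\gamma_n p$ into alternating compositions of the $v_{\gamma_i}$ and $v_{\gamma_j}^*$ with projection factors coming from the ``bounce'' entries $1 - v^*v$ and $1 - vv^*$, and verify that no operators outside $\Alg(\Ga)^\sim$ appear. One must also keep track of the fact that $\Alg(\Ga)$ may be non-unital, so the correspondence with $\Alg(\lambda_{\tilde X}(G))$ is genuinely up to unitization. Once that bookkeeping is settled, the two transfer statements are essentially formal and the proposition follows.
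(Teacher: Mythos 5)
Your treatment of the hard direction (exactness of $\Alg(\Ga)$ implies property A) is essentially the paper's proof: the same $2\times2$ involutions (your $\tilde\gamma$ is the paper's $U(\gamma)$ up to swapping the two copies of $X$), the same group generated by them together with the flip $U(\id_X)$, the inclusion $\Alg(\lambda_{X\sqcup X}(\Ga_2))\subset\mathbb{M}_2\otimes\widetilde{\Alg(\Ga)}$ combined with the permanence of exactness under unitization, matrix amplification and passage to subalgebras, the application of the Main Theorem, and the coarse equivalence of $(X,\mathcal{E})$ with the Schreier graph $\abs{\Ga_2\acts(X\sqcup X)}$. That part is correct.

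Where you diverge is the converse, and there your argument has a genuine gap. You propose to deduce exactness of $\Alg(\Ga)$ from property A by running the Main Theorem backwards and invoking the corner identification $p\,\Alg(\lambda_{\tilde X}(G))\,p=\widetilde{\Alg(\Ga)}$ together with the claim that exactness passes to ``hereditary subalgebras.'' But the projection $p$ onto $\ell_2(X\times\{0\})$ does not in general lie in $\Alg(\lambda_{\tilde X}(G))$ (which is unital, so membership in the multiplier algebra is membership in the algebra itself); consequently $p\,\Alg(\lambda_{\tilde X}(G))\,p$ is a priori only an operator system, not a subalgebra --- $(pap)(pbp)=p(apb)p$ need not lie in it --- and it is certainly not a hereditary subalgebra, so the permanence property you cite does not apply. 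The compression $a\mapsto pap$ is a completely positive contraction, not a $*$-homomorphism, and exactness is not known to transfer along such a map; even the asserted set equality would need the combinatorial expansion you defer. None of this machinery is needed, because the converse is the easy direction: property A of $(X,\mathcal{E})$ makes $\Roe(X,\mathcal{E})$ nuclear, each $\gamma\in\Ga$ has $\graph\gamma\in\mathcal{E}$ so that $\Alg(\Ga)\subset\Roe(X,\mathcal{E})$, and exactness passes to $\Alg$-subalgebras of nuclear algebras. This one-line argument is what the paper means by ``the converse is straightforward''; with that substitution your proof coincides with the paper's.
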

\begin{proof}
    \[U(\gamma):=\begin{pmatrix}
        1-\gamma\gamma^* & \gamma\\
        \gamma^* & 1-\gamma^*\gamma
    \end{pmatrix}\]
    has $\{0,1\}$-valued matrix entries and gives an involution on $X\sqcup X$ for $\gamma\in\Ga$. Let $\Ga_2$ be the group generated by $\{U(\gamma)\}_\gamma$ and $U(\id_X)$. By our main theorem for $\Ga_2\acts X\sqcup X$, exactness of $\Alg(\lambda_{X\sqcup X}(\Ga_2))$ implies property A of $\abs{\Ga_2\acts(X\sqcup X)}$. Since $(X,\mathcal{E})$ is coarsely equivalent to $\abs{\Ga_2\acts(X\sqcup X)}$ and $\Alg(\lambda_{X\sqcup X}(\Ga_2))\subset \mathbb{M}_2\otimes\widetilde{\Alg(\Ga)}$, where $\widetilde{A}$ stands for the unitization of $A$, exactness of $\Alg(\Ga)$ implies property A of $(X,\mathcal{E})$. The converse is straightforward.
\end{proof}


\begin{thebibliography}{CTWY08}

\bibitem[ADR00]{ADR00}
C.~Anantharaman-Delaroche and J.~Renault, \emph{Amenable groupoids}, Monogr.
  Enseign. Math., vol.~36, L'Enseignement Math\'ematique, Geneva, 2000.

\bibitem[AG\v{S}12]{AGS12}
G. Arzhantseva, E. Guentner, and J. \v{S}pakula, \emph{Coarse non-amenability and coarse embeddings}, Geom. Funct. Anal. 22 (2012), no. 1, 22–36.

\bibitem[BNW07]{BNW07}
J.~Brodzki, G.~Niblo, and N.~Wright. \emph{Property A, partial translation structures and uniform embeddings in groups}. J. London Math. Soc., 76(2):479–497, 2007.

\bibitem[BO08]{BO08}
N.~P. Brown and N.~Ozawa, \emph{{$C^*$}-algebras and finite-dimensional
  approximations}, Graduate Studies in Mathematics, vol.~88, American
  Mathematical Society, Providence, RI, 2008.

\bibitem[EH85]{EH85}
E.~G.~Effros and U.~Haagerup, \emph{Lifting problems and local reflexivity for $\Alg$-algebras}, Duke Math. J. 52 (1985), no. 1, 103–128.

\bibitem[HR00]{HR00}
N.~Higson and J.~Roe, \emph{Amenable group actions and the {N}ovikov
  conjecture}, J. Reine Angew. Math. \textbf{519} (2000), 143--153.

\bibitem[KW99]{KW99}
E.~Kirchberg and S.~Wassermann, \emph{Exact groups and continuous bundles of
  {$C^*$}-algebras}, Math. Ann. \textbf{315} (1999), no.~2, 169--203.

\bibitem[NY12]{NY12} P. W. Nowak, G. Yu; 
\emph{Large scale geometry.}
EMS Textbooks in Mathematics.
European Mathematical Society, Z\"urich, 2012. xiv+189 pp.

\bibitem[Oza00]{Oza00}
N.~Ozawa, \emph{Amenable actions and exactness for discrete groups}, C. R.
  Acad. Sci. Paris S\'er. I Math. \textbf{330} (2000), no.~8, 691--695.

\bibitem[Oza23]{Oza23}
N.~Ozawa, \emph{Embeddings of matrix algebras into uniform Roe algebras and quasi-local algebras}, Journal of the European Mathematical Society, 2024

\bibitem[Roe03]{Roe03}
J.~Roe, \emph{Lectures on coarse geometry}, University Lecture Series, vol.~31,
  American Mathematical Society, Providence, RI, 2003.

\bibitem[RW14]{RW14}
J.~Roe and R.~Willett, \emph{Ghostbusting and property {A}}, J. Funct.
  Anal. \textbf{266} (2014), no.~3, 1674--1684.


\bibitem[Sak13]{Sak13b}
H.~Sako, \emph{A generalization of expander graphs and local reflexivity of uniform Roe algebras}, Journal of Functional Analysis 265.7 (2013): 1367-1391.

\bibitem[Sak14]{Sak14}
H.~Sako, \emph{Property {A} and the operator norm localization property for
  discrete metric spaces}, J. Reine Angew. Math. \textbf{690} (2014), 207--216.

\bibitem[Sak20]{Sak20}
H.~Sako, \emph{Finite‐dimensional approximation properties for uniform Roe algebras}, Journal of the London Mathematical Society 102.2 (2020): 623-644.

\bibitem[Sak21]{Sak13a}
H.~Sako, \emph{Property A for coarse spaces}, Topology and its Applications, Volume 300, 2021.

\bibitem[STY02]{STY02}
G.~Skandalis, J.-L. Tu, and G.~Yu, \emph{The coarse {B}aum-{C}onnes conjecture
  and groupoids}, Topology \textbf{41} (2002), no.~4, 807--834.

\bibitem[Yu00]{Yu00}
G.~Yu, \emph{The coarse {B}aum-{C}onnes conjecture for spaces which admit a
  uniform embedding into {H}ilbert space}, Invent. Math. \textbf{139} (2000),
  no.~1, 201--240.
  
\end{thebibliography}
\end{document}